\author{Daniel Erman}
\address{Department of Mathematics, University of Wisconsin, Madison, WI}
\email{\href{mailto:derman@math.wisc.edu}{derman@math.wisc.edu}}
\urladdr{\url{http://math.wisc.edu/~derman/}}
\author{Steven V Sam}
\address{Department of Mathematics, University of California, San Diego, CA}
\email{\href{mailto:ssam@ucsd.edu}{ssam@ucsd.edu}}
\urladdr{\url{http://math.ucsd.edu/~ssam/}}
\author{Andrew Snowden}
\address{Department of Mathematics, University of Michigan, Ann Arbor, MI}
\email{\href{mailto:asnowden@umich.edu}{asnowden@umich.edu}}
\urladdr{\url{http://www-personal.umich.edu/~asnowden/}}
\thanks{DE was partially supported by NSF DMS-1601619.  SS was partially supported by NSF DMS-1500069 and DMS-1651327 and a Sloan Fellowship.  AS was supported by NSF DMS-1303082 and DMS-1453893 and a Sloan Fellowship.}
\subjclass[2010]{%
14M10, 
13C40,  
13L05. 
}
\begin{document}

\title{Strength and Hartshorne's Conjecture in high degree}
\maketitle

\section{Introduction}
Hartshorne conjectured that every smooth, codimension $c$ subvariety of $\bP^n$, with $c<\frac{1}{3}n$, is a complete intersection~\cite[p.~1017]{hartshorne-bulletin}.  We give a new proof of the case when $n\gg \deg X$.  If $X$ is smooth, then we say that the codimension of the singular locus of $X$ is $\dim X+1$.
\begin{theorem}\label{thm:more general}
There is a function $N(c,e)$ such that if $X\subseteq \bP^n_{\bk}$ is an equidimensional, projective subscheme of codimension $c$ and 
degree~$e$, and if the singular locus of $X$ has codimension at least $N(c,e)$, then $X$ is a complete intersection.  In particular, the function $N(c,e)$ does not depend on $n$ or on the field $\bk$.
\end{theorem}

Results like Theorem~\ref{thm:more general} have a long history.  In characteristic zero, Hartshorne first proved the above result in~\cite[Theorem 3.3]{hartshorne-bulletin}.  In parallel, and also in characteristic zero, Barth and Van de Ven proved an effective version of this result, showing that $N=\frac{5}{2}e(e-7)+c$ works~\cite{barth-icm}.
Huneke's work in \cite[Theorem~1.1]{huneke} also implies Theorem~\ref{thm:more general} for arbitrary $c$ and over fields of arbitrary characteristic, and gives an explicit bound in terms of the analytic spread.  Further improvements include: work of Ran~\cite[Theorem]{ran}, sharpening the bound and extending it to arbitrary characteristic, but only when $c=2$; and Bertram-Ein-Lazarsfeld's \cite[Corollary~3]{bertram-ein-lazarsfeld}, which sharpens the bound in arbitrary codimension, but only holds in characteristic zero.  See also~\cites{ballico-chiantini, holme-schneider}.

The novelty in our work is through the new, and concise, method of proof.  The main ingredient in ~\cite{barth-icm} is an analysis of the variety of lines in $X$ through a point, and many of the aforementioned proofs make use of Kodaira Vanishing and topological results like Lefschetz-type restriction theorems. By contrast, we derive Theorem~\ref{thm:more general} from elementary consequences of the notion of strength introduced in~\cite{ananyan-hochster}, and of our result in ~\cite{ess-stillman} that the graded ultraproduct of polynomial rings is isomorphic to a polynomial ring.   This explicitly connects Hartshorne's Conjecture with the circle of ideas initiated by Ananyan and Hochster in~\cite{ananyan-hochster} in their proof of Stillman's Conjecture.  It also underscores similarities between the two conjectures, both of which propose limits on the possible behaviors for varieties of codimension $c$ in $\bP^n$ when $n\gg c$.

The ideas of~\cite{ananyan-hochster} strongly motivated this work, as the connection between strength and the codimension of the singular locus is one of the central ideas in that paper.  Our viewpoint also has some overlap with the Babylonian tower theorems, like ~\cite[Theorems~I and IV]{BVdV} and those in~\cite{coandua, flenner, sato} among others. From an algebraic perspective, the natural setting for such statements is an inverse limit of polynomial rings, and~\cite{ess-stillman, ess-imperfect} shows such an inverse limit shares many properties with the ultraproduct ring.

\begin{remark}
A classical result (see e.g.~\cite{eisenbud-harris}) shows that for any nondegenerate, integral variety in $\bP^n$, the codimension is at most its degree. Thus, under these hypotheses, Theorem~\ref{thm:more general} could be rephrased so as to remove the dependence on $c$.
\end{remark}

\section*{Acknowledgments}
We thank Craig Huneke, David Kazhdan, and an anonymous referee for useful comments.
 
\section{Setup and Background} \label{sec:background}
Each closed subscheme $X\subseteq \bP^n_\bk$ determines a homogeneous ideal $I_X\subseteq \bk[x_0,\dots,x_n]$.  The scheme $X$, or the ideal $I_X$, is {\bf equidimensional of codimension $c$} if all associated primes of $I_X$ have codimension $c$, and $X$ is a {\bf complete intersection} if $I_X$ is defined by a regular sequence.  Since the minimal free resolution of $I_X$ is stable under extending the ground field $\bk$, the property of being a complete intersection is also stable under field extension.

From here on, $\bk$ and $\bk_i$ will denote fields. If $R$ is a graded ring with $R_0=\bk$, then as in~\cite{ananyan-hochster}, we define the {\bf strength} of a homogeneous element $f\in R$ to be the minimal integer $k \ge -1$ for which there is a decomposition $f=\sum_{i=1}^{k+1} g_i h_i$ with $g_i$ and $h_i$ homogeneous elements of $R$ of positive degree, or $\infty$ if no such decomposition exists. The {\bf collective strength} of a set of homogeneous elements $f_1,\dots,f_r\in R$ is the minimal strength of a non-trivial homogeneous $\bk$-linear combination of the $f_i$.

 \begin{lemma}\label{lem:decreasing strength}
Let $R$ be a graded ring with $R_0=\bk$.  If $I\subseteq R$ is homogeneous and finitely generated, then $I$ has a generating set  of homogeneous elements $f_1,\dots,f_r$ where the strength of $f_k$ equals the collective strength of $f_1,\dots,f_k$ for each $1\leq k \leq r$.
\end{lemma}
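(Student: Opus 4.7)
The plan is to build the generators greedily in reverse order, choosing $g_r$ first and working down to $g_1$. Building them forward by minimizing strength fails: if we greedily pick the lowest-strength element first, we may leave no low-strength element available to complete the generating set. For instance, in $(x^3, y^3 + xz^2) \subseteq \bk[x,y,z]$ the minimum-strength element of the ideal is $x^3$ (strength $0$), but no element of $I$ completing $x^3$ to a generating set has strength $0$, whereas the collective strength of any generating set is $0$. The fix is to save the lowest-strength elements for last.

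To carry this out, fix a finite-dimensional graded $\bk$-vector subspace $V \subseteq I$ that generates $I$ as an ideal; this exists because $I$ is finitely generated (take the $\bk$-span of any finite homogeneous generating set). Let $r = \dim_\bk V$ and $W_r = V$. Recursively, for $k = r, r-1, \dots, 1$, choose a nonzero homogeneous element $g_k \in W_k$ of minimum strength (the minimum is attained since strengths lie in $\mathbb{Z}_{\geq 0} \cup \{\infty\}$), and then pick any graded vector-space complement $W_{k-1}$ of $\bk g_k$ inside $W_k$. Such a graded complement exists because $g_k$ is homogeneous, so $\bk g_k$ is a graded subspace of $W_k$.

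To conclude, the relations $W_r = V$ and $W_k = W_{k-1} \oplus \bk g_k$ force $g_1, \dots, g_r$ to be a $\bk$-basis of $V$ and $W_k = \mathrm{span}_\bk(g_1, \dots, g_k)$ for every $k$. Since $V$ generates $I$ as an ideal, so do $g_1, \dots, g_r$. The nontrivial homogeneous $\bk$-linear combinations of $g_1, \dots, g_k$ are precisely the nonzero homogeneous elements of $W_k$, so by our choice of $g_k$ the collective strength of $g_1, \dots, g_k$ equals $\mathrm{str}(g_k)$, as required. The only real subtlety is recognizing that the list must be built in reverse: processing the lowest-strength elements last guarantees that each new $g_k$ actually achieves—rather than merely bounds—the collective strength of the initial segment.
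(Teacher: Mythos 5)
Your proof is correct and takes essentially the same approach as the paper's: both select the minimum-strength homogeneous linear combination first and assign it to the \emph{last} slot $g_r$, then recurse on the remaining span. The paper phrases this as induction on the number of generators (pick $g_r$ of strength equal to the collective strength of $f_1,\dots,f_r$, relabel so its $f_r$-coefficient is nonzero, and recurse on $(f_1,\dots,f_{r-1})$); your version makes the vector-space bookkeeping explicit by tracking the graded subspaces $W_k$, which makes the final verification that $\operatorname{str}(g_k)$ equals the collective strength of $g_1,\dots,g_k$ immediate, whereas the paper's terse induction implicitly relies on the observation that the span of the produced $g_1,\dots,g_r$ sits inside (and hence has collective strength bounded between) that of the original $f_i$'s. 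This is a presentational difference, not a different argument.
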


 \begin{proof}
 Choose any homogeneous generators $g_1,\dots,g_r$ of $I$.  We prove the statement by induction on $r$.  For $r=1$ the statement is tautological.  Now let $r>1$.  By definition of collective strength, we have a $\bk$-linear combination $f_r=\sum_{i=1}^r a_ig_i$ such that the strength of $f_r$ equals the collective strength of $g_1,\dots,g_r$.  After relabeling, we can assume that $a_r\ne 0$ and it follows that $g_1,\dots,g_{r-1},f_r$ generate $I$.  Applying the induction hypothesis to the ideal $(g_1,\dots,g_{r-1})$ yields the desired result.
 \end{proof}

Let $Q=(f_1,\dots,f_r) \subseteq \bk[x_1,x_2,\dots]$.  The ideal of $c\times c$ minors of the Jacobian matrix of $(\frac{\partial f_i}{\partial x_j})$ does not depend on the choice of generators of $Q$.  We denote this ideal by $J_c(Q)$.

\begin{lemma}\label{lem:strength and J}
Let $Q=(f_1,\dots,f_r)$ be a homogeneous ideal in $\bk[x_1,x_2,\dots]$.  If the strength of $f_i$ is at most $s$ for $c\leq i \leq r$,  then $\codim J_c(Q)\leq (r-c+1)(2s+2)$.
\end{lemma}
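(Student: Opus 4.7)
The plan is to exploit Leibniz's rule: if $f=\sum_{j=1}^{s+1} g_j h_j$, then for every variable $x_k$,
$\partial f/\partial x_k = \sum_j \bigl(g_j\,\partial h_j/\partial x_k + h_j\,\partial g_j/\partial x_k\bigr)$
lies in the ideal $(g_1,h_1,\dots,g_{s+1},h_{s+1})$, which has at most $2s+2$ generators. Thus any polynomial of strength at most $s$ has all of its partial derivatives confined to an ideal with at most $2s+2$ generators.

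First I would fix, for each $i$ with $c\le i\le r$, a strength-realizing decomposition $f_i=\sum_{j=1}^{s+1} g_{i,j}h_{i,j}$ and set $I_i := (g_{i,1},h_{i,1},\dots,g_{i,s+1},h_{i,s+1})$; by the observation above, $\partial f_i/\partial x_k \in I_i$ for every $k$. Let $I := I_c + I_{c+1} + \cdots + I_r$. Then $I$ has at most $(r-c+1)(2s+2)$ generators, so $\codim I \le (r-c+1)(2s+2)$.

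The main step is to verify $J_c(Q) \subseteq I$, which then yields $\codim J_c(Q)\le \codim I \le (r-c+1)(2s+2)$. A generator of $J_c(Q)$ is a $c\times c$ minor of the Jacobian whose row indices form some subset $i_1<i_2<\cdots<i_c$ of $\{1,\dots,r\}$; the key observation is that necessarily $i_c \ge c$, so $f_{i_c}$ sits within the controlled-strength range. Expanding the determinant along the row corresponding to $f_{i_c}$ expresses the minor as an $R$-linear combination of the partial derivatives $\partial f_{i_c}/\partial x_{j_\ell}$, each of which lies in $I_{i_c}\subseteq I$.

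I do not anticipate a serious obstacle. The mechanism is just Leibniz plus the elementary fact that the largest row index in a $c\times c$ minor of an $r$-row matrix is at least $c$, which matches precisely the hypothesis that the strength bound holds from index $c$ onward; this suggests that the statement and its proof are natively compatible, and the only bookkeeping point is remembering to expand along the row of largest index.
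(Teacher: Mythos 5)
Your proposal is correct and follows essentially the same route as the paper: decompose each $f_i$ ($c\le i\le r$) using its strength bound, collect the factors into an ideal with at most $(r-c+1)(2s+2)$ generators, observe via the product rule that all entries of rows $c$ through $r$ of the Jacobian lie in this ideal, and note that any $c\times c$ minor of an $r$-row matrix must use some row with index at least $c$, so $J_c(Q)$ is contained in this ideal and Krull's height theorem gives the bound. Your remark about expanding along the row of largest index is exactly the implicit step in the paper's ``every $c\times c$ minor involves row $i$ for some $c\le i\le r$.''
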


\begin{proof}
For each $c \leq i \leq r$, we write $f_i=\sum_{j=0}^{s} a_{i,j}h_{i,j}$ where  $a_{i,j}$ and $h_{i,j}$ have positive degree for all $i,j$.  Write $L_i$ for the ideal $(a_{i,j}, h_{i,j} \mid 0\leq j \leq s)$ and let $L =L_c+L_{c+1}+\dots+L_r$.  The $i$th row of the Jacobian matrix has entries $\frac{\partial f_i}{\partial x_k}$; thus by the product rule, every entry in this row is in $L_i$.  Since every $c\times c$ minor of the Jacobian matrix will involve row $i$ for some $c\leq i \leq r$, it follows that $J_c(Q) \subseteq L$.  Thus $\codim J_c(Q)\leq \codim L$, which by the Principal Ideal Theorem is at most $(r-c+1)(2s+2)$, as this is the number of generators of $L$.
\end{proof}

We briefly recall the definition of the ultraproduct ring, referring to~\cite[\S4.1]{ess-stillman} for a more detailed discussion.  Let $\cI$ be an infinite set and let $\cF$ be a non-principal ultrafilter on $\cI$.  We refer to subsets of $\cF$ as {\bf neighborhoods of $\ast$}, where $\ast$ is an imaginary point of $\cI$. For each $i\in \cI$, let $\bk_i$ be an infinite perfect field.  The ultraproduct of the $\{\bk_i\}$ consists of collections $c=(c_i)_{i\in \cI}$ where $c_i\in \bk_i$, modulo the relation that $c=0$ if and only if $c_i=0$ for all $i$ in some neighborhood of $\ast$; by the axioms of ultrafilters, this ultraproduct is also a (perfect) field. Let $\bS$ denote the graded ultraproduct of $\{\bk_i[x_1,x_2,\dots]\}$, where each polynomial ring is given the standard grading.  An element $g\in \bS$ of degree $d$ corresponds to a collection $(g_i)_{i\in \cI}$ of degree $d$ elements $g_i\in \bk_i[x_1,x_2,\dots]$, modulo the relation that $g=0$ if and only if $g_i=0$ for all $i$ in some neighborhood of $\ast$.  For a homogeneous $g\in \bS$ we write $g_i$ for the corresponding element in $\bk_i[x_1,x_2,\dots]$, keeping in mind that this is only well-defined for $i$ in some neighborhood of $\ast$.  The following comes from~\cite[Theorems~1.2 and 4.6]{ess-stillman}:
\begin{theorem}\label{thm:ultra poly}
Let $K$ be the ultraproduct of perfect fields $\{\bk_i\}$ and fix $y_1,\dots, y_c\in \bS$ of infinite collective strength.  There is a set $\cU$, containing the $y_i$, such that
$\bS$ is isomorphic to the polynomial ring $K[\cU]$.
\end{theorem}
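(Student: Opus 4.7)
My plan is to build the set $\cU$ inductively by degree, then verify that the resulting graded $K$-algebra homomorphism $\varphi\colon K[\cU]\to \bS$ is an isomorphism. Set $\cU_1$ to be any $K$-basis of $\bS_1$ containing every $f_i$ of degree $1$; the $K$-linear independence of the degree-$1$ $f_i$'s is immediate from infinite collective strength, since any nontrivial dependence would be an element of strength less than $\infty$. Inductively, for $d\geq 2$, let $M_d\subseteq \bS_d$ denote the $K$-span of the degree-$d$ products of elements of $\bigsqcup_{d'<d} \cU_{d'}$, and note that the set $\{f_i : \deg f_i=d\}$ is $K$-linearly independent modulo $M_d$: a dependence $\sum a_i f_i \in M_d$ would exhibit $\sum a_i f_i$ as a sum of products of positive-degree elements, forcing finite strength and contradicting the hypothesis. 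Extend $\{f_i : \deg f_i=d\}$ to a basis of a $K$-linear complement of $M_d$ in $\bS_d$, call this basis $\cU_d$, and put $\cU = \bigsqcup_d \cU_d$.

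Surjectivity of $\varphi$ follows by induction on degree, since $\bS_d = M_d \oplus K\cdot \cU_d$ and $M_d$ lies in the image of $\varphi$ by construction. Injectivity is the main step. Suppose a nontrivial homogeneous relation $P(u_1,\ldots,u_n)=0$ holds in $\bS$ with $u_j\in \cU$ and $P\in K[y_1,\ldots,y_n]$. By \L o\'s's theorem, the corresponding finite-level relation $P_i(u_{1,i},\ldots,u_{n,i})=0$ holds in $\bk_i[x_1,x_2,\ldots]$ for $i$ in a neighborhood of $\ast$. I would then leverage Lemma~\ref{lem:strength and J}: a polynomial relation among the $u_{j,i}$'s forces the Jacobian matrix of the $u_{j,i}$'s to drop rank on a locus of small codimension; since the $f_i$'s appear among the $u_j$'s, Lemma~\ref{lem:strength and J} would then translate this into a finite strength bound on some $K$-linear combination of the $f_j$'s, contradicting infinite collective strength.

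The main obstacle I expect is making the injectivity argument rigorous, because the basis elements of $\cU_d\setminus\{f_i\}$ are chosen rather arbitrarily as a vector-space complement, so a priori no strength control is available for them. A cleaner route, which I suspect is what the cited paper actually does, is to first prove the single-element version --- that any $f\in \bS$ of infinite strength embeds as one coordinate of a polynomial-ring structure on $\bS$ --- and then iterate by replacing $\bS$ with the quotient $\bS/(f)$ and $(f_2,\ldots,f_r)$ with their images (which retain infinite collective strength modulo $f$, essentially because strength is non-decreasing under such quotients). This reduction ultimately rests on the $r=0$ case: that $\bS$ itself is isomorphic to a polynomial ring over $K$. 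That structural statement, not the extension to a given tuple $f_1,\ldots,f_r$, is the true heart of the theorem.
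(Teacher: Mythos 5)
This theorem is not proved in the present paper: it is imported as a black box from~\cite{ess-stillman} (their Theorems~1.2 and~4.6), so there is no internal proof to compare your sketch against, and any assessment must be on the merits of the sketch alone. Your construction of $\cU$ by degree-induction and your surjectivity argument are fine, but the injectivity step has a real gap, which you correctly anticipate but do not resolve. A homogeneous polynomial relation $P(u_1,\dots,u_n)=0$ in $\bS$ imposes no intrinsic strength constraint on the $u_j$: for instance $u_3 - u_1u_2 = 0$ tells you nothing about the strength of any $u_j$. Since the elements of $\cU_d\setminus\{f_1,\dots,f_r\}$ are chosen only as a vector-space complement, nothing bounds their strength, so Lemma~\ref{lem:strength and J} cannot be brought to bear; moreover that lemma runs in the wrong direction for your purposes --- it converts low strength of generators into low codimension of a Jacobian ideal, whereas you would need to convert a relation into low strength of a $K$-linear combination of the $f_i$.

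The ``cleaner route'' you float --- quotienting by $f$ and iterating --- also does not close the gap as stated, because $\bS/(f)$ is not manifestly a graded ultraproduct of polynomial rings over perfect fields, so the theorem's hypotheses cannot simply be re-invoked for the quotient, and ``strength'' in the quotient need not behave as in $\bS$. Your instinct that the $r=0$ case is the heart of the matter is partially right but partially misleading: in~\cite{ess-stillman} the polynomial-ring structure is not obtained first and the $f_i$ then inserted by change of variables; the $f_i$ are built into the generating set from the start. The genuine content is a strength-to-regularity statement --- finitely many elements of infinite collective strength are algebraically independent and in fact form a regular sequence, established by Ananyan--Hochster-style singular-locus arguments --- followed by a Zorn's-lemma extension of $\{f_1,\dots,f_r\}$ to a maximal family in which every finite subfamily has infinite collective strength, together with the key verification that such a maximal family must generate $\bS$. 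The difficulty sits in that last verification, not in showing an arbitrary linear complement is algebraically independent (which is simply false without arrangement).
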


The following result follows immediately from~\cite[Theorem~5.2]{cmpv}.  While that result does not explicitly note the independence on the field $\bk$, it follows from the proof.

\begin{lemma}\label{lem:numgens and degree}
Fix $c$ and $e$.  There exist positive integers $d$ and $r$, depending only on $c$ and $e$, such that any homogeneous, equidimensional, and radical ideal $Q\subseteq \bk[x_1,\dots,x_n]$ of codimension $c$ and degree $\leq e$ can be generated (not necessarily minimally) by homogeneous polynomials $g_1,\dots,g_r$ where $\deg(g_i) \le d$. Neither $r$ nor $d$ depend on $n$ or $\bk$.
\end{lemma}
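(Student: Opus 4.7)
The plan is to prove the lemma by contradiction using Theorem~\ref{thm:ultra poly}. If the conclusion failed, then for each $(r,D)\in \bN^2$ one could choose an infinite perfect field $\bk_{r,D}$ and a homogeneous, equidimensional, radical ideal $Q_{r,D}\subseteq \bk_{r,D}[x_1,x_2,\dots]$ of codimension $c$ and degree $e$ admitting no homogeneous generating set of size $\le r$ whose generators all have degree $\le D$. Indexing these by a set $\cI$ with a non-principal ultrafilter driving both $r,D\to\infty$, I would form the ultraproduct ideal $Q\subseteq \bS$. Standard ultraproduct and Hilbert-function transfer, together with the structural results of~\cite{ess-stillman}, show that $Q$ inherits the properties of being homogeneous, equidimensional, radical, of codimension $c$ and degree $e$.

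The heart of the argument is to show that such a $Q \subseteq \bS$ must be generated by boundedly many homogeneous elements of bounded degree, with bounds depending only on $c$ and $e$. I would apply the construction of Lemma~\ref{lem:decreasing strength} (extended, if necessary, to a possibly transfinite generating sequence) to produce generators $g_\alpha$ for $Q$ with the strength of $g_\alpha$ equal to the collective strength of the preceding ones, so that the strengths form a nondecreasing sequence. At most $c$ of the $g_\alpha$ can have infinite strength: any set of infinite collective strength extends by Theorem~\ref{thm:ultra poly} to a polynomial basis of $\bS$, hence generates an ideal whose codimension equals its cardinality, and this ideal is contained in $Q$ of codimension $c$.

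To handle the finite-strength generators I would pass to the irredundant primary decomposition $Q = P_1 \cap \cdots \cap P_s$, noting $s \le e$ and that each $P_i$ is a prime of codimension $c$ and degree at most $e$, thereby reducing the problem to the case of a single prime $P$. I would then choose a maximal set $h_1,\dots,h_t$ of elements of infinite collective strength in $P$ (with $t \le c$) and use Theorem~\ref{thm:ultra poly} to identify $\bS \cong K[\cU]$ with $h_1,\dots,h_t \in \cU$. Then $\bS/(h_1,\dots,h_t) \cong K[\cU \setminus \{h_1,\dots,h_t\}]$ is once more a polynomial ring, and the image $\overline P$ of $P$ is a prime of codimension $c - t$ in this polynomial ring, consisting entirely of elements of finite strength.

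The main obstacle is to bound the generators of $\overline P$. Intuitively, since $\overline P$ contains no elements of infinite strength, each of its elements involves only finitely many of the remaining variables in an effective way, and the task is to show that $\overline P$ is in fact finitely generated inside a finite-dimensional polynomial subring of $\bS/(h_1,\dots,h_t)$. Once this reduction to the classical noetherian setting is accomplished, standard Castelnuovo--Mumford regularity bounds for equidimensional radical ideals of codimension $c-t$ and degree at most $e$ in ordinary polynomial rings provide the required bounded-degree generating set for $\overline P$; one can use Lemma~\ref{lem:strength and J} together with the lower bound $\codim J_c(Q) \ge c+1$ (which holds because $K$ is perfect, so $\bS/Q$ is generically smooth) as a sanity check that a bounded-strength generating set suffices. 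Lifting $(h_1,\dots,h_t)$ together with degree-bounded lifts of generators of $\overline P$ back through $\bS$ and then to almost all $Q_i$ via the ultraproduct yields a generating set of $Q_i$ that contradicts the choice of $Q_{r,D}$, completing the proof.
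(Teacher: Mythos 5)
The step you yourself flag as ``the main obstacle''---showing that $\overline P$ (the finite-strength part of a prime component) is finitely generated inside a finite-dimensional polynomial subring of $\bS$---is exactly where the proposal breaks down, and it is not a gap that can be filled with ``intuitively, each element involves only finitely many variables.'' A bounded-strength element of $\bS$ can still involve infinitely many variables, and the claim that a prime ideal all of whose elements have bounded strength lives in a finitely generated subring is essentially the content of the Ananyan--Hochster small-subalgebras theorem; you are assuming the hardest part. There is also an earlier soft spot: you form ``the ultraproduct ideal $Q\subseteq\bS$'' without specifying how. If $Q$ is defined as $\{(f_i):f_i\in Q_i$ for $i$ near $\ast\}$, then there is no a priori reason for $Q$ to be finitely generated (ideals of $\bS$ need not be), nor does it automatically inherit ``codimension $c$'' or ``degree $e$'' without a uniform degree bound on generators of the $Q_i$---which is the very thing the lemma is trying to prove.

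The paper's proof avoids both problems with one classical input. By Eisenbud--Huneke--Vasconcelos (or Mumford), each $Q_i$ can be generated \emph{up to radical} by a regular sequence $f_{1,i},\dots,f_{c,i}$ of $c$ forms of degree exactly $e$. This gives a uniform, bounded-degree family, so the elements $f_j=(f_{j,i})$ make sense in $\bS$ and define a finitely generated ideal $J=(f_1,\dots,f_c)$. Since $\bS$ is a polynomial ring by Theorem~\ref{thm:ultra poly}, the $f_j$ lie in a Noetherian polynomial subring and $P=\sqrt J$ is extended from that subring, hence finitely generated, say $P=(g_1,\dots,g_r)$. Finally, a straightforward chain of equivalences (using that $P=\sqrt J$ and that $Q_i=\sqrt{J_i}$) shows $P_i=Q_i$ near $\ast$, so $\nu(Q_i)$ is bounded by $\sum\deg g_k$, giving the contradiction. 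No primary decomposition, transfinite strength-sorting, or Castelnuovo--Mumford regularity bounds are needed; that machinery belongs to the alternate route via~\cite{cmpv} alluded to in the remark before the lemma, not to this proof. If you want to salvage your strategy, you would need to prove (or invoke) a genuine finiteness theorem for bounded-strength ideals in $\bS$, which is a substantially stronger tool than Theorem~\ref{thm:ultra poly} as stated.
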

\begin{proof}
By~\cite[Theorem~5.2]{cmpv}, both the regularity of $Q$ and the individual Betti numbers $\beta_{i,j}(Q)$ are bounded solely in terms of $c$ and $e$.  Choosing $d$ as the regularity bound and $r$ as the bound on $\sum_{i=1}^d \beta_{0,d}(Q)$, we obtain the desired statement.
\end{proof}
 
 \section{Proof of the main result}

\begin{theorem}\label{thm:Nreg}
There is a function $N(c,e)$ such that if $Q\subseteq \bk[x_1,\dots,x_n]$ is a homogeneous, equidimensional ideal of codimension $c$ and degree $e$ and if $V(Q)$ is nonsingular in codimension $\geq N(c,e)$, then $Q$ is defined by a regular sequence of length $c$.  In particular, $N(c,e)$ does not depend on $n$ or on the field $\bk$.
\end{theorem}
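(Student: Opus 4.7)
The plan is to argue by contradiction using Theorem~\ref{thm:ultra poly}. Suppose no such $N(c,e)$ works. Then for each $j \in \bN$ there is a counterexample $Q_j \subseteq \bk_j[x_1, x_2, \dots]$: a homogeneous equidimensional ideal of codimension $c$ and degree $e$, with $V(Q_j)$ nonsingular in codimension $\geq j$, but not generated by a regular sequence. Since nonsingular in codimension $\geq 1$ forces $V(Q_j)$ to be generically reduced and equidimensionality rules out embedded primes, each $Q_j$ is radical; the codimension condition moreover forces the ambient dimension $n_j$ to tend to infinity.

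By Lemma~\ref{lem:numgens and degree} we may (after reindexing) take a uniform bound $r$ on the number of generators of $Q_j$ and a uniform bound on their degrees, depending only on $c$ and $e$. Applying Lemma~\ref{lem:decreasing strength} to each $Q_j$, choose generators $g_{1,j}, \dots, g_{r,j}$ such that the strength of $g_{k,j}$ equals the collective strength of $g_{1,j}, \dots, g_{k,j}$. Assembling these via the graded ultraproduct yields elements $g_k = (g_{k,j}) \in \bS$ and an ideal $\tilde{Q} = (g_1, \dots, g_r) \subseteq \bS$, and the ultraproduct-ideal correspondence (as used at the end of the proof of Lemma~\ref{lem:numgens and degree}) identifies membership in $\tilde{Q}$ with membership in $Q_j$ for $j$ near $\ast$.

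The crux is a dichotomy on the collective strength $s$ of $g_1, \dots, g_c$ in $\bS$. If $s$ is finite, the arrangement above forces the strength of each $g_{k,j}$ with $c \leq k \leq r$ to be at most $s$ for $j$ near $\ast$, so Lemma~\ref{lem:strength and J} gives $\codim J_c(Q_j) \leq (r-c+1)(2s+2)$. Combined with $n_j \to \infty$ and the projective intersection-dimension estimate, this produces a uniform upper bound $(r-c+1)(2s+2)$ on the codimension of the singular locus of $V(Q_j)$ inside $V(Q_j)$, contradicting the hypothesis that this codimension exceeds $j$. Hence $s = \infty$, and Theorem~\ref{thm:ultra poly} realizes $\bS$ as a polynomial ring in which $g_1, \dots, g_c$ appear among the variables, so $(g_1, \dots, g_c)$ is prime of height $c$. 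Since $\tilde{Q}$ itself is radical of height $c$ (both properties transferring from the $Q_j$), the only minimal prime of $\tilde{Q}$ is $(g_1, \dots, g_c)$ and thus $\tilde{Q} = (g_1, \dots, g_c)$. Transferring back, $Q_j = (g_{1,j}, \dots, g_{c,j})$ for $j$ near $\ast$, and since these $c$ elements generate a height-$c$ ideal in the Cohen-Macaulay ring $\bk_j[x_1, x_2, \dots]$ they form a regular sequence, contradicting the choice of $Q_j$.

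The main technical obstacle is verifying that radicality, height, finite strength, and ideal membership all transfer cleanly between the individual polynomial rings $\bk_j[x_1, x_2, \dots]$ and the ultraproduct $\bS$; these rest on the framework of graded ultraproducts from \cite{ess-stillman}, with arguments structurally the same as those deployed in the proof of Lemma~\ref{lem:numgens and degree}.
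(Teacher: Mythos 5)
Your proposal follows essentially the same route as the paper: argue by contradiction, use Lemma~\ref{lem:numgens and degree} to get uniform generator and degree bounds, pass to the graded ultraproduct $\bS$, apply Lemma~\ref{lem:decreasing strength} to arrange decreasing strengths, set up the dichotomy on collective strength, rule out the finite-strength case via Lemma~\ref{lem:strength and J}, and use Theorem~\ref{thm:ultra poly} in the infinite-strength case to force $Q_j$ to be a complete intersection near $\ast$.

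There are two small places where your write-up diverges or is thinner than the paper's, neither of which changes the approach. First, you omit the initial reduction to $\bk$ perfect: both Lemma~\ref{lem:numgens and degree} and the ultraproduct framework of Theorem~\ref{thm:ultra poly} are stated for perfect fields, so the field-extension argument at the start of the paper's proof (checking that minimal generators, singular locus codimension, and codimension of minimal primes are unaffected) is actually needed, not just a convenience. Second, you apply Lemma~\ref{lem:decreasing strength} to each $Q_j$ individually and then assemble the ultraproduct, while the paper assembles first and then applies the lemma in $\bS$; your variant works (the lemma preserves the multiset of degrees, and a nontrivial bounded-strength $K$-combination in $\bS$ descends to one in $\bk_j[x]$ near $\ast$, forcing the strength of $g_{c,j}$, and hence of all $g_{k,j}$ with $k\ge c$, to be bounded), but you should state the descent explicitly since that is where the dichotomy actually engages. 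Your wrap-up — $\tilde Q$ is radical with $(g_1,\dots,g_c)$ a prime of height $c$ inside it, so $\tilde Q=(g_1,\dots,g_c)$, and membership transfer then gives $Q_j=(g_{1,j},\dots,g_{c,j})$ near $\ast$ — is a reasonable and somewhat more explicit phrasing of the paper's terser conclusion that $f_{c+1}=\dots=f_r=0$; you should, however, actually justify the height transfer rather than asserting it, since height is not a first-order condition and does not pass through ultraproducts for free (the paper sidesteps this by appealing directly to the equidimensional radical structure of the $Q_j$ together with the ultraproduct membership equivalence).
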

\begin{remark}
Since an equidimensional ideal of codimension $c$ that is nonsingular in codimension $2c+1$ must be prime, it would be equivalent to rephrase Theorem~\ref{thm:Nreg} in terms of prime ideals.  We stick with equidimensional and radical ideals because some of the auxiliary results in this paper might be of interest with this added generality.
\end{remark}

\begin{proof}[Proof of Theorem~\ref{thm:Nreg}]
We first reduce to the case where $\bk$ is perfect.  Extending the field will change neither the minimal number of generators of $Q$, nor the codimension of the singular locus.  By taking $N(c,e)\geq 1$, we can also assume that $Q$ is radical, even after extending the field.  Finally, since a field extension will not change the codimension of any minimal prime of $Q$~\cite[00P4]{stacks}, we can assume that $\bk$ is perfect and that $Q$ is radical and equidimensional of codimension $c$.

Suppose that the theorem were false.  Then for some fixed $c,e$ and for each $j\in \bN$ we can find an equidimensional, radical ideal $Q_j'\subseteq \bk_j[x_1,x_2,\dots]$ (with $\bk_j$ perfect) of codimension $c$ and degree $e$ that is not a complete intersection, but where the codimension of the singular locus of $V(Q_j')$ tends to $\infty$ as $j\to \infty$.  Since the singular locus of $V(Q_j')$ is defined by $Q_j'+J_c(Q_j')$, this implies that $\codim J_c(Q_j') \to \infty$ as $j\to \infty$.  We choose a function $m \colon \cI \to \bN$ where $m(i)$ is unbounded in each neighborhood of $\ast$.  For each $i \in \cI$, define $Q_i$ to be any of the $Q_j'$ satisfying $\codim J_c(Q_j')\geq m(i)$.  By construction, $\codim J_c(Q_i)$ is unbounded in every neighborhood of $\ast$.

We now apply Lemma~\ref{lem:numgens and degree} for each $i\in \cI$, to find positive integers $r$ and $d$ and homogeneous $g_{1,i}, \dots, g_{r,i}$ of degree $\leq d$ which generate $Q_i$.  Let $g_1=(g_{1,i}), \dots, g_r=(g_{r,i})$ be the corresponding elements in $\bS$ and let $Q=(g_1,\dots,g_r)$.  By Lemma~\ref{lem:decreasing strength}, we can find a new homogeneous generating set $f_1,\dots,f_r$ of $Q$ where the strength of $f_k$ is the collective strength of $f_1,\dots, f_k$ for each $1\leq k\leq r$.  For each $k$, we may write $f_k = (f_{k,i})$.

If $f_c$ had strength at most $s$, then we observe that the same holds for $f_{c,i}$ in a neighborhood of $\ast$; for if $f_c=\sum_{j=0}^s a_jh_j$ then $f_{c,i}=\sum_{j=0}^s (a_j)_i(f_j)_i$ for $i$ near $\ast$.  But by Lemma~\ref{lem:strength and J}, this would imply that $\codim J_c(Q_i)$ is bounded in a neighborhood of $\ast$.  Since this cannot happen, $f_c$ must have infinite strength.  Thus the collection $f_1,\dots,f_c$ has infinite collective strength and so applying Theorem~\ref{thm:ultra poly} with $y_i=f_i$, we conclude that $f_1,\dots,f_c$ are independent variables in $\bS$.  In particular, $(f_1,\dots,f_c)$ defines a prime ideal of codimension $c$ and we therefore must have $f_{c+1}=\dots=f_{r}=0$.  By~\cite[Corollary~4.10]{ess-stillman}, there is a neighborhood of $\ast$ where each $Q_i$ is a complete intersection, contradicting our original assumption.
\end{proof}

\begin{proof}[Proof of Theorem~\ref{thm:more general}]
As in the beginning of the proof of Theorem~\ref{thm:Nreg}, we can quickly reduce to the case where $\bk$ is perfect.  For a fixed $c$ and $e$, we let $N$ equal the bound from Theorem~\ref{thm:Nreg}. Fix some $X\subseteq \bP^n$ satisfying the hypotheses of Theorem~\ref{thm:more general}, and let $Q\subseteq \bk[x_1,\dots,x_{n+1}]$ be the defining ideal of $X$.  By Theorem~\ref{thm:Nreg}, $Q$ is defined by a regular sequence, and thus $X$ is a complete intersection.
\end{proof}

\begin{bibdiv}
\begin{biblist}

\bib{ananyan-hochster}{article}{
   author={Ananyan, Tigran},
   author={Hochster, Melvin},
   title={Small subalgebras of polynomial rings and Stillman's conjecture},
   journal={J. Amer. Math. Soc.},
   volume={33},
   date={2020},
   pages={291--309},
note={\arxiv{1610.09268v3}},
}

\bib{ballico-chiantini}{article}{
   author={Ballico, Edoardo},
   author={Chiantini, Luca},
   title={On smooth subcanonical varieties of codimension $2$\ in ${\bf
   P}^{n},$ $n\geq 4$},
   journal={Ann. Mat. Pura Appl. (4)},
   volume={135},
   date={1983},
   pages={99--117},
}

\bib{barth-icm}{article}{
   author={Barth, Wolf},
   title={Submanifolds of low codimension in projective space},
   conference={
      title={Proceedings of the International Congress of Mathematicians},
      address={Vancouver, B.C.},
      date={1974},
   },
   book={
      publisher={Canad. Math. Congress, Montreal, Que.},
   },
   date={1975},
   pages={409--413},
}

\bib{BVdV}{article}{
   author={Barth, W.},
   author={Van de Ven, A.},
   title={A decomposability criterion for algebraic $2$-bundles on
   projective spaces},
   journal={Invent. Math.},
   volume={25},
   date={1974},
   pages={91--106},
}

\bib{BVdV-Grassman}{article}{
   author={Barth, W.},
   author={Van de Ven, A.},
   title={On the geometry in codimension $2$ of Grassmann manifolds},
   conference={
      title={Classification of algebraic varieties and compact complex
      manifolds},
   },
   book={
      publisher={Springer, Berlin},
   },
   date={1974},
   pages={1--35. Lecture Notes in Math., Vol. 412},
}

\bib{bertram-ein-lazarsfeld}{article}{
   author={Bertram, Aaron},
   author={Ein, Lawrence},
   author={Lazarsfeld, Robert},
   title={Vanishing theorems, a theorem of Severi, and the equations
   defining projective varieties},
   journal={J. Amer. Math. Soc.},
   volume={4},
   date={1991},
   number={3},
   pages={587--602},
}

\bib{coandua}{article}{
   author={Coand\u a, Iustin},
   title={A simple proof of Tyurin's Babylonian tower theorem},
   journal={Comm. Algebra},
   volume={40},
   date={2012},
   number={12},
   pages={4668--4672},
}

\bib{cmpv}{article}{
   author={Caviglia, Giulio},
   author={Chardin, Marc},
   author={McCullough, Jason},
   author={Peeva, Irena},
   author={Varbaro, Matteo},
   title={Regularity of prime ideals},
   journal={Math. Z.},
   volume={291},
   date={2019},
   number={1-2},
   pages={421--435},
}
\bib{eisenbud-harris}{article}{
   author={Eisenbud, David},
   author={Harris, Joe},
   title={On varieties of minimal degree (a centennial account)},
   conference={
      title={Algebraic geometry, Bowdoin, 1985},
      address={Brunswick, Maine},
      date={1985},
   },
   book={
      series={Proc. Sympos. Pure Math.},
      volume={46},
      publisher={Amer. Math. Soc., Providence, RI},
   },
   date={1987},
   pages={3--13},
}
		
\bib{eisenbud-huneke-vasconcelos}{article}{
   author={Eisenbud, David},
   author={Huneke, Craig},
   author={Vasconcelos, Wolmer},
   title={Direct methods for primary decomposition},
   journal={Invent. Math.},
   volume={110},
   date={1992},
   number={2},
   pages={207--235},
}

\bib{ess-stillman}{article}{
   author={Erman, Daniel},
   author={Sam, Steven~V},
   author={Snowden, Andrew},
   title={Big polynomial rings and Stillman's Conjecture},
   journal={Invent. Math.},
   volume={218},
   date={2019},
   number={2},
   pages={413--439},
   note={\arxiv{1801.09852v3}}
}

\bib{ess-imperfect}{article}{
   author={Erman, Daniel},
   author={Sam, Steven~V},
   author={Snowden, Andrew},
   title={Big polynomial rings with imperfect coefficient fields},
   journal={Michigan Math. J. (to appear)},
   note={\arxiv{1806.04208v1}}
}

\bib{flenner}{article}{
   author={Flenner, Hubert},
   title={Babylonian tower theorems on the punctured spectrum},
   journal={Math. Ann.},
   volume={271},
   date={1985},
   number={1},
   pages={153--160},
}

\bib{hartshorne-bulletin}{article}{
   author={Hartshorne, Robin},
   title={Varieties of small codimension in projective space},
   journal={Bull. Amer. Math. Soc.},
   volume={80},
   date={1974},
   pages={1017--1032},
}

\bib{holme-schneider}{article}{
   author={Holme, Audun},
   author={Schneider, Michael},
   title={A computer aided approach to codimension $2$ subvarieties of ${\bf
   P}_n,\;n \geqq 6$},
   journal={J. Reine Angew. Math.},
   volume={357},
   date={1985},
   pages={205--220},
}

\bib{huneke}{article}{
	author = {Huneke, Craig},
	title = {Criteria for complete intersections},
	volume = {32},
	date = {1985},
	journal = {J.\ London Math.\ Soc.},
	pages = {19--30},
}
\bib{mumford}{article}{
   author={Mumford, David},
   title={Varieties defined by quadratic equations},
   conference={
      title={Questions on Algebraic Varieties},
      address={C.I.M.E., III Ciclo, Varenna},
      date={1969},
   },
   book={
      publisher={Edizioni Cremonese, Rome},
   },
   date={1970},
   pages={29--100},
}

\bib{ran}{article}{
   author={Ran, Z.},
   title={On projective varieties of codimension $2$},
   journal={Invent. Math.},
   volume={73},
   date={1983},
   number={2},
   pages={333--336},
}

\bib{sato}{article}{
   author={Sato, Ei-ichi},
   title={Babylonian Tower theorem on variety},
   journal={J. Math. Kyoto Univ.},
   volume={31},
   date={1991},
   number={4},
   pages={881--897},
}

\bib{stacks}{misc}{
label={Stacks},
  author       = {The {Stacks Project Authors}},
  title        = {Stacks Project},
  year         = {2017},
  note = {\url{http://stacks.math.columbia.edu}},
}

\end{biblist}
\end{bibdiv}
\end{document}